\numberwithin{equation}{section}
\newtheorem{thm}{Theorem}[section]
\newtheorem{pro}[thm]{Proposition}
\newtheorem{cor}[thm]{Corollary}
\theoremstyle{definition}
\theoremstyle{remark}
\newcommand{\bd}{\mathrm{bd}}
\numberwithin{equation}{section}
\begin{document}
%%%%%%%%%%% Begin Topmatter %%%%%%%%%%%%%%%%%

\title[Local homological properties and cyclicity of homogeneous ANR compacta]
{Local homological properties and cyclicity of homogeneous ANR compacta}

\author{V. Valov}
\address{Department of Computer Science and Mathematics,
Nipissing University, 100 College Drive, P.O. Box 5002, North Bay,
ON, P1B 8L7, Canada} \email{veskov@nipissingu.ca}

\thanks{The author was partially supported by NSERC
Grant 261914-13.}

 \keywords{Bing-Borsuk conjecture for homogeneous compacta, dimensionally full-valued compacta, homology membrane, homological dimension, homology groups, homogeneous metric $ANR$-compacta}

\subjclass[2010]{Primary 55M10, 55M15; Secondary 54F45, 54C55}
\begin{abstract}
In accordance with the Bing-Borsuk conjecture \cite{bb}, we show that if $X$ is an $n$-dimensional homogeneous metric $ANR$ compactum and $x\in X$, then there is a local basis at $x$ consisting of connected open sets $U$ such that the homological properties of $\overline U$ and $bd\,\overline U$ are similar to the
properties of the closed ball $\mathbb B^n\subset\mathbb R^n$ and its boundary $\mathbb S^{n-1}$. We discuss also the following questions raised by Bing-Borsuk \cite{bb}, where $X$ is a homogeneous $ANR$-compactum with $\dim X=n$:
\begin{itemize}
\item Is it true that $X$ is cyclic in dimension $n$?
\item Is it true that no non-empty closed subset of $X$, acyclic in dimension $n-1$, separates $X$?
\end{itemize}
It is shown that both questions have simultaneously positive or negative answers, and a positive solution to each one of them implies a solution to another question of Bing-Borsuk (whether every finite-dimensional  homogenous metric $AR$-compactum is a point).
\end{abstract}
\maketitle\markboth{}{Homogeneous $ANR$}
%%%%%%%%%% End topmatter %%%%%%%%%%%%%%%%%%%%%

%%%%%%%%%%%%%%%%%%%%%%%%%%%%%%%%%%%%%%%%%%%%%%%%%%%%%%%%%%%%%%%%
%%%%%%%%%%%%%%%%%%%%%%%%%%%%%%%%%%%%%%%%%%%%%%%%%%%%%%%%%%%%%%%%

%%%%%%%%%%%%%%%%%% TABLE OF CONTENT %%%%%%%%%%%%%%%%%%%%%%%%%%%%%%%%%%

%\tableofcontents

%%%%%%%%%%%%%%%%%%%%%%%%%%%%%%%%%%%%%%%%%%%%%%%%%%%%%%%%%%%%%%%%%%%
%%%%%%%%%%%%%%%%%%%%%%%%%%%%%%%%%%%%%%%%%%%%%%%%%%%%%%%%%%%%%%%%%%%%%%

\section{Introduction}
There are few open problems concerning homogeneous compacta, see \cite{bb}. The most important one is the well known Bing-Borsuk conjecture stating that every $n$-dimensional homogeneous metric $ANR$-compactum $X$ is an $n$-manifold. Another one is whether any such $X$ has the following properties: $(i)$ $X$ is cyclic in dimension $n$; $(ii)$ no closed non-empty subset of $X$, acyclic in dimension $n-1$, separates $X$. It is also unknown if there exists a non-trivial finite dimensional metric homogeneous $AR$ compactum.

In this paper we address the above problems and investigate the homological structure of
homogeneous metric $ANR$-compacta. In accordance with the Bing-Borsuk conjecture, we prove that any such a compactum has local homological properties similar to the local structure of $\mathbb R^n$, see Theorem 1.1. It is also shown that the properties $(i)$ and $(ii)$ from the second of the above questions are equivalent, so each one of them implies that every finite-dimensional homogeneous metric $AR$ is a point.

%Everywhere in this paper by a space we mean a homogeneous metric $ANR$ compactum $X$  with $\dim_GX=n$, where $G$ is a fixed countable abelian group and $n\geq 2$.
Reduced \v{C}ech homology $H_n(X;G)$ and cohomology groups $H^n(X;G)$ with coefficients from $G$ are considered everywhere below, where $G$ is an abelian group. %or a countable principal ideal domain (PID).
Suppose $(K,A)$ is a pair of closed subsets of a space $X$ with $A\subset K$. By $i^n_{A,K}$ we denote the homomorphism from $H_n(A;G)$ into $H_n(K;G)$ generated by the inclusion $A\hookrightarrow K$.
Following \cite{bb}, we say that
$K$ is an {\em $n$-homology membrane spanned on $A$ for an element $\gamma\in H_n(A;G)$} provided $\gamma$ is homologous to
zero in $K$, but not homologous to zero in any proper closed subset of $K$ containing $A$. It is well known \cite[Property 5., p.103]{bb} that for every compact metric space $X$ and a closed set $A\subset X$ the existence a non-trivial element $\gamma\in H_n(A;G)$ with $i^n_{A,X}(\gamma)=0$ yields the existence of a closed set $K\subset X$ containing $A$ such that $K$ is an $n$-homology membrane for $\gamma$ spanned on $A$.
We also say that a space $K$ is a homological {\em $(n,G)$-bubble} if $H_n(K;G)\neq 0$, but $H_n(B;G)=0$ for every closed proper subset $B\subset K$.

For any abelian group $G$, Alexandroff \cite{a} introduced the dimension $d_GX$ of a space $X$ as the maximum integer $n$ such that  there exist a closed set $F\subset X$ and a nontrivial element $\gamma\in H_{n-1}(F;G)$ with $\gamma$ being $G$-homologous to zero in $X$. According to \cite[p.207]{a} we have the following inequalities for any metric finite-dimensional compactum $X$: $d_GX\leq\dim X$ and $\dim X=d_{\mathbb Q_1}X=d_{\mathbb S^1}X$, where $G$ is any abelian group, $\mathbb S^1$ is the circle group and $\mathbb Q_1$ is the group of rational elements of $\mathbb S^1$.

Because the definition of $d_GX$ does not provide any information for the homology groups $H_{k-1}(F;G)$ when $F\subset X$ is closed and $k<d_GX-1$, we consider
the set $\mathcal{H}_{X,G}$ of all integers $k\geq 1$ such that there exist a closed set $F\subset X$ and a nontrivial element $\gamma\in H_{k-1}(F;G)$ with $i^{k-1}_{F,X}(\gamma)=0$. Obviously, $d_GX=\max\mathcal{H}_{X,G}$.

Using the properties of the sets $\mathcal{H}_{X,G}$, we investigate in Section 2 the local homological properties of metric homogeneous $ANR$ compacta. The main result in that section is Theorem 1.1 below, which is a homological version of \cite[Theorem 1.1]{vv1}.
\begin{thm}
Let $X$ be a finite dimensional homogeneous metric $ANR$ with $\dim X\geq 2$. Then every point $x\in X$ has a basis $\mathcal B_x=\{U_k\}$ of open sets such that for any abelian group $G$ and $n\geq 2$ with $n\in\mathcal{H}_{X,G}$ and $n+1\not\in\mathcal{H}_{X,G}$ almost all $U_k$ satisfy the following conditions:
\begin{itemize}
\item[(1)] $H_{n-1}(\bd\,\overline  U_k;G)\neq 0$ and $\overline{U}_k$ is an $(n-1)$-homology membrane spanned on $\bd\,\overline  U_k$ for any non-zero $\gamma\in H_{n-1}(\bd\,\overline  U_k;G)$;
\item[(2)] $H_{n-1}(\overline U_k;G)=H_{n}(\overline U_k;G)=0$ and $X\setminus\overline U_k$ is connected;
%\item[(3)] $H_{n-1}(bd\,\overline  U;G)\neq 0$;
\item[(3)] $\bd\,\overline  U_k$ is a homological $(n-1,G)$-bubble.
\end{itemize}
\end{thm}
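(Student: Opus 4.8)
The plan is to fix $x$ and exploit that $X$, being an $ANR$, is locally contractible, and that by homogeneity the local homological picture is the same at every point. I would first build the basis $\mathcal B_x=\{U_k\}$ from a decreasing sequence of connected open neighbourhoods with $\operatorname{diam}U_k\to 0$, chosen so that $\overline{U}_k\subseteq U_{k-1}$ and $U_{k-1}$ contracts in $U_{k-2}$ (hence $\overline{U}_k$ contracts in $X$), and so that the ``radii'' are generic enough for the boundaries $\bd\overline U_k$ to be well-behaved partitions. All homology is \v{C}ech homology, which coincides with singular homology on the $ANR$ $X$ and is taut and continuous; in particular $\check H_*(\overline U_k)=\varprojlim_V\check H_*(V)$ over neighbourhoods $V\supseteq\overline U_k$, and excision gives $\check H_*(\overline U_k,\bd\overline U_k)\cong\check H_*(X,X\setminus U_k)$. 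The organising device is the long exact sequence of the pair $(\overline U_k,\bd\overline U_k)$, which converts each of (1)--(3) into a statement about $\check H_*(\overline U_k;G)$ and the relative groups $\check H_*(X,X\setminus U_k;G)$, the latter approximating the local homology $\check H_*(X,X\setminus\{x\};G)$ as $k\to\infty$. I may assume $X$ connected, passing to the clopen, homeomorphic component of $x$ if necessary.

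The vanishing $\check H_n(\overline U_k;G)=0$ in (2) I would obtain directly from the gap hypothesis: if $0\neq\beta\in\check H_n(\overline U_k;G)$ then, for large $k$, $\overline U_k$ contracts in $X$, so $i^n_{\overline U_k,X}(\beta)=0$, and taking $F=\overline U_k$ would place $n+1$ in $\mathcal H_{X,G}$, contrary to hypothesis; hence almost all $\overline U_k$ have $\check H_n=0$. The non-vanishing $\check H_{n-1}(\bd\overline U_k;G)\neq 0$ then follows from the exact sequence, once the interior vanishings $\check H_{n-1}(\overline U_k)=\check H_n(\overline U_k)=0$ are in place, since it becomes equivalent to $\check H_n(X,X\setminus U_k;G)\neq 0$; this latter I would derive from $n\in\mathcal H_{X,G}$ by starting with a closed $F$ and a class $\gamma\in\check H_{n-1}(F;G)$ that dies in $X$, replacing $F$ by a membrane $K\supseteq F$ via \cite[Property 5.]{bb}, and then using homogeneity (Effros' theorem) to transport a small portion of $K$ into an arbitrarily small neighbourhood of $x$, certifying $\check H_n(X,X\setminus\{x\};G)\neq 0$ and, by continuity, $\check H_n(X,X\setminus U_k;G)\neq 0$ for almost all $k$. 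Connectedness of $X\setminus\overline U_k$, the remaining half of (2), I would read off from $\dim X\ge 2$ and the ball-like choice of $\overline U_k$: for $n\ge 2$ two points of the complement can be joined by a path avoiding the small set $\overline U_k$, so its removal does not separate the homogeneous continuum $X$.

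The delicate points are $\check H_{n-1}(\overline U_k;G)=0$ together with the two minimality assertions: that $\overline U_k$ is a membrane for every nonzero $\gamma\in\check H_{n-1}(\bd\overline U_k;G)$ in (1), and that $\bd\overline U_k$ is an $(n-1,G)$-bubble in (3). I would secure $\check H_{n-1}(\overline U_k)=0$ \emph{by construction} rather than from a dimension inequality: choosing the radius so that $\bd\overline U_k$ is a minimal partition carrying an essential $(n-1)$-class and then invoking \cite[Property 5.]{bb} to replace $\overline U_k$ by the membrane it contains, so that all $(n-1)$-homology is supported on the boundary. The minimality is where $n+1\notin\mathcal H_{X,G}$ does the real work: if a proper closed $B\subsetneq\bd\overline U_k$ carried a nonzero class in $\check H_{n-1}(B;G)$, or if a proper closed $K$ with $\bd\overline U_k\subseteq K\subsetneq\overline U_k$ killed $\gamma$, I would feed the smaller configuration into the exact sequence and the membrane machinery of \cite[Property 5.]{bb} to produce a nonzero element of $\check H_n$ of a closed subset that nevertheless dies in $X$, i.e.\ a witness for $n+1\in\mathcal H_{X,G}$, the desired contradiction. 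Homogeneity enters once more, to promote these minimal configurations from ``at some point / for some neighbourhood'' to ``at $x$ and for almost all members of the fixed basis.''

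I expect the crux to be \emph{decoupling} the boundary homology from the interior --- simultaneously arranging $\check H_{n-1}(\overline U_k)=0$ and the membrane/bubble minimality --- precisely because $X$ is not yet known to be a manifold and its local homology may be spread over several degrees. The two hypotheses are calibrated exactly for this: $n\in\mathcal H_{X,G}$ supplies an essential $(n-1)$-class to place on the boundary, while $n+1\notin\mathcal H_{X,G}$ forbids any higher class from surviving, which is what drives the minimality reductions. The step most likely to resist a purely formal treatment is the passage from the single global membrane furnished by \cite[Property 5.]{bb} to a cofinal family of genuinely \emph{spherical} boundaries at the one point $x$; this is where homogeneity together with the tautness and continuity of \v{C}ech homology must be combined carefully to upgrade ``for some closed set'' to ``for almost every basic neighbourhood.''
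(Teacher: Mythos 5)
Your overall strategy shares some ingredients with the paper's proof (contractibility of $\overline U_k$ in $X$ plus the gap hypothesis $n+1\notin\mathcal H_{X,G}$ to kill $H_n(\overline U_k;G)$; Effros' theorem to push a piece of a membrane into a small neighbourhood; the Bing--Borsuk membrane machinery), but there are three genuine gaps. First, your organising device --- the long exact sequence and excision for the pair $(\overline U_k,\bd\,\overline U_k)$ --- is not available: the paper works with \v{C}ech homology with arbitrary coefficients, which is not an exact theory (this is precisely why the author switches to Steenrod--Sklyarenko exact homology in Section 3). The paper circumvents this entirely, replacing exact-sequence bookkeeping by the membrane properties 5 and 6 of \cite{bb} and a Phragmen--Brouwer argument (Proposition 2.1), so your reductions ``via the exact sequence of the pair'' do not go through as stated. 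Second, your justification of the connectedness of $X\setminus\overline U_k$ (``for $n\ge 2$ two points of the complement can be joined by a path avoiding the small set $\overline U_k$'') is a general-position argument that has no basis when $X$ is not known to be a manifold; a small closed set can a priori separate a $2$-dimensional homogeneous $ANR$. In the paper this connectedness, together with $\bd\,U_k=\bd\,\overline U_k$ and the connectedness of $\bd\,\overline U_k$, is \emph{imported} from the cohomological predecessor \cite[Theorem 1.1]{vv1}; it is an input, not something derived on the spot. Moreover, $H_{n-1}(\overline U_k;G)=0$ is then obtained cleanly from Corollary 2.2(2) (a nonzero class would force $\overline U_k$ to separate $X$, contradicting that connectedness), not ``by construction'' as you propose --- replacing $\overline U_k$ by the membrane it contains would destroy the property of being the closure of a basic open set.

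The most serious gap is item (3), the bubble property of $\bd\,\overline U_k$. You claim that a nonzero class on a proper closed subset $B\subsetneq\bd\,\overline U_k$ can be fed into the membrane machinery to produce a witness for $n+1\in\mathcal H_{X,G}$, but no such mechanism exists along the lines you describe: the membrane $P\subset\overline U_k$ spanned on $B$ satisfies $(P\setminus B)\cap\overline{X\setminus P}=\varnothing$ by Corollary 2.2(3), which is exactly the configuration in which Proposition 2.1 produces \emph{no} $n$-dimensional class. The paper's actual argument for (3) is not homological at all: it shows $F=B$ separates $\overline U_k$ (using the connectedness inputs), then invokes the fact from \cite{vv1} that $(\overline U_k,\bd\,U_k)$ is an $(m-1)$-cohomology membrane, hence a strong $K^m_{\mathbb Z}$-manifold by \cite[Proposition 2.10]{vt}, so that \cite[Theorem 3.3]{vt} forces $H^{m-1}(F;\mathbb Z)\neq 0$ with $m=\dim X$ --- contradicting the property $H^{m-1}(A;\mathbb Z)=0$ for proper closed subsets $A$ of the boundary, again supplied by \cite{vv1}. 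This integral-cohomology input is indispensable to the published proof and is entirely absent from your outline, so your plan for (3) does not close.
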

%We say that an open set $U\subset X$ is {\em relatively $(n-1)$-acyclic} if the homomorphism
%$j_{\overline{U},bdU}\colonH^{n-1}(\overline{U};G)\toH^{n-1}(bdU;G)$ is trivial.

%\begin{cor}
%Let $X$ be as in Theorem $1.1$. Then any point has a basis consisting of sets $U$ such that $\overline{U}$ is an Alexandroff $D_G^{n-2}$-manifold while  $bdU$ %is an  Alexandroff $D_G^{n-3}$-manifold.
%\end{cor}

\begin{cor}
Let $X$ be as in Theorem $1.1$. Then $X$ has the following property for any abelian group $G$ and $n\geq 2$ with $n\in\mathcal{H}_{X,G}$ and $n+1\not\in\mathcal{H}_{X,G}$: If a closed subset $K\subset X$ is an $(n-1)$-homology membrane spanned on $B$ for some closed set $B\subset X$ and $\gamma\in H_{n-1}(B;G)$, then $(K\setminus B)\cap\overline{X\setminus K}=\varnothing$.
\end{cor}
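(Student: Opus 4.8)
The plan is to argue by contradiction: assuming some point $x$ lies in $(K\setminus B)\cap\overline{X\setminus K}$, I will produce a proper closed subset of $K$ that still contains $B$ and in which $\gamma$ bounds, contradicting the definition of an $(n-1)$-homology membrane. First note that if $B=K$ the conclusion is trivial, while if $B\subsetneq K$ then $B$ itself is a proper closed subset containing $B$, so the membrane hypothesis forces $\gamma\neq0$; thus assume $B\subsetneq K$ and $\gamma\neq0$. I would then feed $x$ into Theorem~1.1 to obtain a basic neighborhood $U\in\mathcal B_x$ satisfying (1)--(3). Since $x\notin B$ and $B$ is closed, $U$ may be taken small enough that $\overline U\cap B=\varnothing$; since $x\in\overline{X\setminus K}$ the open set $U$ meets $X\setminus K$, so $\overline U\not\subseteq K$. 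I would also assume, as one may for a local basis in a metric $ANR$, that $U$ is regular open, so that $\overline U\setminus U=\bd\,\overline U$.

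Next I set $K'=K\setminus U$ and $L=K\cap\overline U$. Then $K'$ is closed, $B\subseteq K'$ (as $U\cap B=\varnothing$) and $K'\subsetneq K$ (as $x\in K\cap U$); moreover $K=K'\cup L$ and $K'\cap L=K\cap\bd\,\overline U=:C$. Writing $\gamma'=i^{\,n-1}_{B,K'}(\gamma)$, I apply the Mayer--Vietoris sequence of the closed pair $(K';L)$ in \v{C}ech homology
\[
H_{n-1}(C;G)\xrightarrow{\ \psi\ }H_{n-1}(K';G)\oplus H_{n-1}(L;G)\xrightarrow{\ \phi\ }H_{n-1}(K;G),
\]
where $\phi(\alpha,\beta)=i^{\,n-1}_{K',K}(\alpha)-i^{\,n-1}_{L,K}(\beta)$. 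Since $\gamma$ bounds in $K$ one has $\phi(\gamma',0)=i^{\,n-1}_{B,K}(\gamma)=0$, so $(\gamma',0)=\psi(c)$ for some $c\in H_{n-1}(C;G)$; that is, $i^{\,n-1}_{C,K'}(c)=\gamma'$ and $i^{\,n-1}_{C,L}(c)=0$. It therefore suffices to prove that $c=0$.

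The heart of the matter is a dichotomy governed by whether $\bd\,\overline U\subseteq K$. If $C=K\cap\bd\,\overline U$ is a \emph{proper} closed subset of $\bd\,\overline U$, then the bubble property (3) gives $H_{n-1}(C;G)=0$, whence $c=0$ immediately. If instead $C=\bd\,\overline U$, i.e.\ $\bd\,\overline U\subseteq K$, then $L=K\cap\overline U$ is a closed subset of $\overline U$ that contains $\bd\,\overline U$ and is proper (because $\overline U\not\subseteq K$); the membrane property (1) asserts precisely that no nonzero class of $H_{n-1}(\bd\,\overline U;G)$ is homologous to zero in such an $L$, i.e.\ $i^{\,n-1}_{\bd\,\overline U,L}$ is injective, and since $i^{\,n-1}_{C,L}(c)=0$ we again obtain $c=0$. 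In either case $\gamma'=i^{\,n-1}_{C,K'}(c)=0$, so $\gamma$ bounds in the proper closed set $K'\supseteq B$, contradicting the membrane hypothesis on $K$ and establishing $(K\setminus B)\cap\overline{X\setminus K}=\varnothing$.

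The step I expect to be most delicate is not the case analysis itself but the two technical points flanking it: justifying that the seam $K'\cap L$ equals $K\cap\bd\,\overline U$ (which is exactly why one wants $U$ regular open, so that $\overline U\setminus U$ is $\bd\,\overline U$ rather than something larger), and invoking exactness of the Mayer--Vietoris sequence for \v{C}ech homology of the compact pair $(K';L)$. The conceptual key is the realization that condition (1) disposes of the case $\bd\,\overline U\subseteq K$ while the complementary case requires the bubble property (3); note that the connectedness of $X\setminus\overline U$ from (2) plays no role in this argument.
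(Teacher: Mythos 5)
Your reduction to the decomposition $K=K'\cup L$ with $K'=K\setminus U$, $L=K\cap\overline U$ and seam $C=K\cap\bd\,\overline U$, together with the dichotomy (bubble property (3) when $C$ is a proper subset of $\bd\,\overline U$, injectivity of $i^{n-1}_{\bd\,\overline U,L}$ from the membrane property (1) when $\bd\,\overline U\subset K$), is sound as far as it goes. The genuine gap is exactly the step you flag but do not resolve: lifting $(\gamma',0)\in\ker\phi$ to a class $c\in H_{n-1}(C;G)$ requires exactness of the Mayer--Vietoris sequence at the middle term, and \v{C}ech homology with an \emph{arbitrary} abelian coefficient group $G$ does not satisfy the exactness axiom, even for compact metric pairs. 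At the level of nerves the sequence is exact, but after passing to the inverse limit the kernel of $\phi$ equals $\varprojlim$ of the nerve-level images while the image of $\psi$ can be strictly smaller (the obstruction is a nonvanishing $\varprojlim^1$); exactness does hold for fields or compact coefficient groups, but the corollary is asserted for all $G$. The paper is visibly careful about precisely this issue elsewhere --- in Section 3 it switches to the exact Steenrod--Sklyarenko homology $\widehat H_*$ and tracks when the comparison map $T^k_X$ is an isomorphism --- so an unjustified exact Mayer--Vietoris sequence for \v{C}ech homology cannot be waved through here.

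The paper's own proof avoids Mayer--Vietoris entirely: from $a\in(K\setminus B)\cap\overline{X\setminus K}$ and a small $U\in\mathcal B_a$ with $U\cap B=\varnothing$, it invokes Bing--Borsuk's Property 6 to restrict the membrane, obtaining that $\overline{U\cap K}$ is a homology membrane for some nonzero $\alpha\in H_{n-1}(F_U;G)$ with $F_U=\bd_K(U\cap K)\subset\bd\,U$; it then uses your same two ingredients in the opposite order: the bubble property (3) forces $F_U=\bd\,U$, and the membrane property (1) forces $\overline{U\cap K}=\overline U$, whence $U\subset K$, contradicting $a\in\overline{X\setminus K}$. If you replace your Mayer--Vietoris step by the cited Property 6 --- which encapsulates, at the level of Vietoris cycles, the chain decomposition you are trying to perform by hand --- your argument essentially collapses into the paper's.
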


In Section 3 we show that the following two statements are equivalent, where $\mathcal H(n)$ is the class of all homogeneous metric $ANR$-compacta $X$ with $\dim X=n$:
\begin{itemize}
\item[(1)] For all $n\geq 1$ and $X\in\mathcal H(n)$ there is a group $G$ such that $H^{n}(X;G)\neq 0$ (resp., $H_{n}(X;G)\neq 0$);
\item[(2)] If $X\in\mathcal H(n)$ with $n\geq 1$, and $F\subset X$ is a closed separator of $X$ with $\dim F=n-1$, then there exists a group $G$ such that  $H^{n-1}(F;G)\neq 0$ (resp., $H_{n-1}(F;G)\neq 0$).
\end{itemize}
Therefore, we have the following result (see Corollary 3.3):

\begin{thm}
Suppose for all $n\geq 1$ and all $X\in\mathcal H(n)$ the following holds: For every closed separator $F$ of $X$ with $\dim F=n-1$ there exists a group $G$ such that either $H^{n-1}(F;G)\neq 0$ or $H_{n-1}(F;G)\neq 0$. Then there is no homogeneous metric $AR$-compactum $Y$ with $\dim Y<\infty$.
\end{thm}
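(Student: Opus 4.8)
The plan is to argue by contradiction and to reduce everything to the Section~3 equivalence together with the elementary fact that an $AR$ is acyclic. So suppose, to the contrary, that $Y$ is a homogeneous metric $AR$-compactum with $1\le n=\dim Y<\infty$. Being an $AR$, $Y$ is contractible, hence $H^{k}(Y;G)=0$ and $H_{k}(Y;G)=0$ for every $k\ge 0$ and every abelian group $G$; in particular $H^{n}(Y;G)=H_{n}(Y;G)=0$ for all $G$. On the other hand every $AR$ is an $ANR$, so $Y$ is a homogeneous metric $ANR$-compactum of dimension $n\ge 1$, i.e.\ $Y\in\mathcal H(n)$, and (being contractible) it is a continuum.

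First I would observe that the hypothesis of the theorem is precisely statement~(2) of the Section~3 dichotomy, in its combined cohomological/homological form, and that the conclusion is to be extracted from the implication $(2)\Rightarrow(1)$. Applying $(1)$ to the single space $Y\in\mathcal H(n)$ produces an abelian group $G$ with $H^{n}(Y;G)\neq 0$ or $H_{n}(Y;G)\neq 0$, which contradicts the vanishing just recorded; the contradiction forces $\dim Y\ge 1$ to be impossible, i.e.\ a finite-dimensional homogeneous metric $AR$-compactum must be a point. To feed the separator hypothesis into $(2)\Rightarrow(1)$ I would first produce, by the separation theorems of dimension theory, a closed separator $F$ of $Y$ with $\dim F=n-1$; such an $F$ exists because $Y$ is an $n$-dimensional continuum. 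I would also use the Alexandroff inequalities quoted in the introduction, namely $\dim Y=d_{\mathbb{S}^1}Y=d_{\mathbb{Q}_1}Y=n$, to guarantee that $n\in\mathcal H_{Y,\mathbb{S}^1}$ and $n+1\notin\mathcal H_{Y,\mathbb{S}^1}$, so that the membrane/bubble machinery of Theorem~1.1 is available along the transfer from $F$ to $Y$.

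The hard part will be reconciling the per-separator reading ``there is $G$ with $H^{n-1}(F;G)\neq0$ or $H_{n-1}(F;G)\neq0$'' with the fact that the Section~3 equivalences are stated \emph{separately} for cohomology and for homology: the cohomological equivalence converts the cohomological separator property into $H^{n}(Y;G')\neq0$, and the homological one converts the homological property into $H_{n}(Y;G'')\neq0$, but a priori the witnessing group could serve cohomology for some separators and homology for others, so neither global equivalence is directly fed. The crux is therefore to show that for the contractible $Y$ the two coefficient choices can be forced to coincide---equivalently, to locate a single separator $F$ with $\dim F=n-1$ whose top-dimensional reduced homology and cohomology both vanish for all coefficient groups, which would violate the separator hypothesis outright. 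I expect the resolution to come from acyclicity itself: since every class of $Y$ is homologous to zero, the lifting in Section~3 (which exploits homogeneity to move a nontrivial $(n-1)$-class of a separator into an $n$-class of the whole space) applies verbatim to whichever version of the hypothesis is nonempty, and the vanishing of all top (co)homology of $Y$ should collapse the ``either/or'' to a genuine contradiction. Verifying this collapse cleanly, rather than merely case-splitting, is the step I anticipate requiring the most care.
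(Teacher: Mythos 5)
Your setup is right (argue by contradiction; the $AR$-compactum $Y$ with $\dim Y=n\geq 1$ is contractible, so all its reduced homology and cohomology groups vanish for every coefficient group), and you have correctly isolated the real difficulty: the hypothesis gives, for each separator, \emph{either} a cohomological \emph{or} a homological witness, so neither Theorem 3.1(3) nor Theorem 3.2(3) is fed directly. But your plan for resolving this does not close the gap. Producing a closed separator $F$ of $Y$ itself with $\dim F=n-1$ by dimension theory leads nowhere: knowing $H^{n-1}(F;G)\neq 0$ or $H_{n-1}(F;G)\neq 0$ for such an $F$ does not contradict the acyclicity of $Y$, because there is no implication from the $(n-1)$-cyclicity of one separator to the $n$-cyclicity of the ambient space (the Mayer--Vietoris map $H^{n-1}(F;\mathbb Z)\to H^{n}(Y;\mathbb Z)$ exploited in Section 3 is surjective under the relevant hypotheses, not injective). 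The appeal to Theorem 1.1, the membrane machinery, and the Alexandroff inequalities is likewise a detour that plays no role here.

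The missing idea is the product trick that drives $(3)\Rightarrow(1)$ in both Theorems 3.1 and 3.2: apply the hypothesis not to a separator of $Y$ but to the single explicit separator $F=Y\times\mathbb S^1$ of $X=Y\times\mathbb S^2\in\mathcal H(n+2)$, which is closed of dimension $n+1=\dim X-1$. The hypothesis then yields one group $G'$ with $H^{n+1}(Y\times\mathbb S^1;G')\neq 0$ or $H_{n+1}(Y\times\mathbb S^1;G')\neq 0$, and the ``either/or'' is settled precisely by the case split you were hoping to avoid, carried out on this one separator. In the cohomological case the K\"unneth sequence together with $H^{i}(Y;G')=0$ for all $i\geq 1$ (contractibility plus $\dim Y=n$) forces $H^{n+1}(Y\times\mathbb S^1;G')=0$. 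In the homological case the exact-homology and universal-coefficient argument from the proof of Theorem 3.2, using $\dim(Y\times\mathbb S^1)=n+1$ and $H^{n+2}(Y\times\mathbb S^1;\mathbb Z)=0$, converts $H_{n+1}(Y\times\mathbb S^1;G')\neq 0$ into $H^{n+1}(Y\times\mathbb S^1;\mathbb Z)\neq 0$, which the same K\"unneth computation with $\mathbb Z$ coefficients (now using $H^{n}(Y;\mathbb Z)=0$) again rules out. Both horns of the alternative die on the same separator, so no global choice between ``cohomology'' and ``homology'' is ever needed; without this construction the proposal does not reach a contradiction.
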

\section{Local homological properties of homogeneous $ANR$-compacta}

We begin this section with the following analogue of Theorem 8.1 from \cite{bb}.
\begin{pro}
Let $X$ be a locally compact and homogeneous separable metrizable $ANR$ space. Suppose there is a pair $F\subset K$ of
compact proper subsets of $X$ such that $K$ is contractible in $X$ and $K$ is a homological membrane for some $\gamma\in H_{n-1}(F;G)$.
If $(K\setminus F)\cap\overline{X\setminus K}\neq\varnothing$, then there exists of a proper compact subset $P\subset X$ contractible in $X$ such that $H_{n}(P;G)\neq 0$.
\end{pro}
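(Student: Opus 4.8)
The plan is to exhibit $P$ as the union of two membranes for the same class and to detect its $n$-th homology by a Mayer--Vietoris argument. The homological core is the following principle, valid for reduced \v{C}ech homology of compact pairs by the exactness properties used in \cite{bb}: if $A,B\subset X$ are compacta, $C=A\cap B$, and $\beta\in H_{n-1}(C;G)$ satisfies $i^{n-1}_{C,A}(\beta)=0=i^{n-1}_{C,B}(\beta)$ while $\beta\neq 0$, then in the sequence
\[ H_n(A;G)\oplus H_n(B;G)\longrightarrow H_n(A\cup B;G)\xrightarrow{\;\partial\;} H_{n-1}(C;G)\xrightarrow{\;(i^{n-1}_{C,A},\,i^{n-1}_{C,B})\;} H_{n-1}(A;G)\oplus H_{n-1}(B;G) \]
the class $\beta$ lies in $\ker(i^{n-1}_{C,A},i^{n-1}_{C,B})=\partial\bigl(H_n(A\cup B;G)\bigr)$, whence $H_n(A\cup B;G)\neq 0$. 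Thus it suffices to produce a second membrane $K'$ for $\gamma$ whose overlap with $K$ still carries $\gamma$ but kills it.

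Homogeneity enters through the Effros theorem: since $X$ is homogeneous, locally compact and separable metrizable, its homeomorphism group acts micro-transitively, so $p$ can be moved to nearby points by arbitrarily small homeomorphisms. Using the hypothesis $(K\setminus F)\cap\overline{X\setminus K}\neq\varnothing$ at $p$, I would choose $q\in X\setminus K$ close to $p$ and a small homeomorphism $h$ of $X$ with $h(q)=p$, localized in a neighborhood $V$ of $p$ with $\overline V\cap F=\varnothing$ (so that $h$ fixes $F$ pointwise). Set $K'=h(K)$. Then $F\subset K'$ and $\gamma=h_*\gamma$ still dies in $K'$, and $K'$ is contractible in $X$ as a homeomorphic image of $K$. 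Crucially $p=h(q)\in X\setminus K'$, so $C:=K\cap K'$ is a \emph{proper} closed subset of $K$ containing $F$.

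At this point the membrane property of $K$ does the essential work: because $C$ is a proper closed subset of $K$ containing $F$, the class $\gamma$ is not homologous to zero in $C$, that is $\beta:=i^{n-1}_{F,C}(\gamma)\neq 0$; on the other hand $i^{n-1}_{C,K}(\beta)=i^{n-1}_{F,K}(\gamma)=0$ and $i^{n-1}_{C,K'}(\beta)=i^{n-1}_{F,K'}(\gamma)=0$, so $\beta$ dies in both $K$ and $K'$. By the principle above, $P:=K\cup K'$ satisfies $H_n(P;G)\neq 0$, and $P$ is a proper compact subset of $X$ (automatic when $X$ is noncompact, and otherwise secured by keeping the push localized).

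The principal difficulty — and the step I expect to be the real obstacle — is to show that $P=K\cup K'$ is contractible in $X$. This does \emph{not} follow from $K$ and $K'$ being separately contractible in $X$: their null-homotopies need not agree on $C$, and no single homeomorphism of $X$ can push both sheets into one. The plan is to exploit the locality of the Effros push: $h$, together with a homotopy $h_t$ from the identity to $h$, is supported in the small set $V$ disjoint from $F$, so $K$ and $K'$ coincide outside $V$ and differ only in a small cap near $p$. I would then contract $P$ by using the homotopy extension theorem in the $ANR$ $X$ to absorb the cap $K'\setminus K$ into $K$ across the collar furnished by $h_t$, reducing the inclusion of $P$ to that of $K$, and finally applying the given contraction of $K$. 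Checking that this cap is attached along a cofibration — equivalently, that the push can be chosen so that $K'\setminus K$ deformation retracts onto $C$ inside a small neighborhood — is the technical heart of the argument; the rest is bookkeeping with the Effros theorem and standard $ANR$ homotopy facts.
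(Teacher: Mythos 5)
Your overall strategy --- produce a second copy $K'$ of the membrane by an Effros push, arrange that the transported class dies in both copies but survives on the overlap, and detect $H_n(K\cup K')$ by the Phragmen--Brouwer/Mayer--Vietoris principle --- is exactly the skeleton of the paper's proof (which follows Montgomery's Lemma 1). But there is a genuine gap at the point you lean on hardest: you ask for the Effros homeomorphism $h$ to be \emph{supported} in a small neighbourhood $V$ of $p$ with $\overline V\cap F=\varnothing$, ``so that $h$ fixes $F$ pointwise''. Effros' theorem gives no such thing: it produces a homeomorphism of $X$ that moves every point by less than a prescribed $\delta$, but it is not the identity outside any prescribed set and in particular need not fix $F$. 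Without $h|F=\mathrm{id}$ your set $C=K\cap K'$ need not contain $F$, the class $\beta=i^{n-1}_{F,C}(\gamma)$ is not defined, and the identification $h_*\gamma=\gamma$ fails. This is not a technicality to be waved away; it is precisely where the paper's construction differs from yours. The paper replaces your $K$ by $K_1=K\cup h(F\times[0,1])$, where $h$ is a small \emph{homotopy} from $\mathrm{id}_K$ to $h_1=h_1'|K$ (obtained from the ANR property, since the Effros homeomorphism $h_1'$ is $\delta$-close to the identity), and takes $K_2=h_1(K)$, $K_0=K_1\cap K_2$. The track $h(F\times[0,1])$ sits inside $K_1$ and identifies the images of $\gamma$ and $(h_1|F)_*\gamma$ there, which is what lets one run your two vanishing computations on $K_0$ even though $F$ itself has moved; the smallness condition $2\epsilon=d(a,F)$ together with the $\epsilon$-smallness of the homotopy guarantees $b\in K_2\setminus K_1$, so that $K_0$ is a proper subset of $K_2$ and the membrane property applies to $D=h_1^{-1}(K_0)$.

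On the step you single out as the ``technical heart'' --- contractibility of $P$ in $X$ --- you are overcomplicating something with a one-line solution. Since $X$ is an ANR, the null-homotopy of $K$ in $X$ extends to a null-homotopy of a compact neighbourhood $\overline U$ of $K$; if from the outset all the data (the push, the homotopy track, hence $K_1\cup K_2$) are kept inside $U$, then $P\subset\overline U$ is contractible in $X$ because $\overline U$ is. No cofibration or homotopy-extension argument on the cap is needed, and properness of $P$ is arranged by shrinking $U$ away from a point of $X\setminus K$.
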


\begin{proof}
We follow the proof of \cite[Lemma 1]{mo} (let us note that the proof of Proposition 2.1 can also be obtained following the arguments of \cite[Theorem 8.1]{bb}). Let $a\in (K\setminus F)\cap\overline{X\setminus K}$. Then $a$ is a boundary point for $K$. Because $K$ is contractible in $X$, there is a homotopy $g:K\times [0,1]\to X$ such that $g(x,0)=x$ and $g(x,1)=c\in X$ for all $x\in K$. Then we can find an open set $U\subset X$ containing $K$ and a homotopy $\overline g:\overline U\times [0,1]\to X$ extending $g$ and connecting the identity on $\overline U$ and the constant map $\overline U\rightarrow c$ (this can be done since $X$ is an $ANR$). So, $\overline U$ is also contractible in $X$. Moreover, we can assume that $\overline U$ is compact. Fix a metric $d$ on $X$ generating its topology in the following way: consider $X$ as a subspace of its one-point compactification $\alpha X$ and take $d$ to be the restriction to $X$ of some admissible metric on $\alpha X$. Let $2\epsilon=d(a,F)$ and take an open cover $\omega$ of $U$ such that for any two $\omega$-close maps $f_1,f_2:K\to U$ (i.e., for all $x\in K$ the points $f_1(x),f_2(x)$ are contained in some element of $\omega$) there is an $\epsilon$-homotopy $\Phi:K\times [0,1]\to U$ between $f_1$ and $f_2$ (i.e., each set $M_\Phi(x)=\{\Phi(x,t):t\in [0,1]\}$, $x\in K$, is of diameter $<\epsilon$). This can be done because $U$ is an $ANR$. Now, we fix an open set $V\subset X$ containing $K$ with $\overline V\subset U$ and let $\delta$ be the Lebesgue's number of the open cover $\{\Gamma\cap\overline V:\Gamma\in\omega\}$ of $\overline V$. According to Effros' theorem \cite{e}, there is a positive
number $\eta$  such that if $x,y\in X$ are two points with $d(x,y)<\eta$, then $f(x)=y$ for some homeomorphism $h\colon X\to X$, which is $\min\{\delta,d(K,X\setminus V\}$-close to the identity on $X$ (the Effros' theorem can be applied because of the special choice of the metric $d$). Since $a$ is a boundary point for $K$, we can choose a point $b\in V\setminus K$ with $d(a,b)<\eta$. Then, there exists a homeomorphism $h_1':X\to X$ such that
$h_1'(a)=b$ and $d(x,h_1'(x))<\min\{\delta,d(K,X\setminus V\}$, $x\in X$. Let $h_1$ be the restriction $h_1'|K$. Obviously, $h_1:K\to h_1(K)$ is a homeomorphism with $h_1(K)\subset V$ and $h_1$ is $\delta$-close to the identity on $K$. Then, according to the choice of $\delta$, there is homotopy $h:K\times [0,1]\to U$ such that $h(x,0)=x$,
$h(x,1)=h_1(x)$ and $d(x,h(x,t))<\epsilon$ for all $x\in K$ and $t\in [0,1]$.

Let $K_1=K\cup h(F\times\mathbb I)$, $K_2=h_1(K)$ and $K_0=K_1\cap K_2$, where $\mathbb I=[0,1]$. Since $2\epsilon=d(a,F)$ and $h$ is an $\epsilon$-small homotopy, $b\in K_2\setminus K_1$. So, $K_0$ is a proper subset of $K_2$ containing $h_1(F)$. Hence, $D=h_1^{-1}(K_0)$ is a proper subset of $K$ containing $F$, which implies $\gamma_1=i^{n-1}_{F,D}(\gamma)\neq 0$. Because $h_1$ is a homeomorphism, $(\varphi_1)_*:H_{n-1}(D;G)\to H_{n-1}(K_0;G)$ is an isomorphism, where $\varphi_1=h_1|D$. Thus,
$\hat{\gamma}=(\varphi_1)_*(\gamma_1)\neq 0$.

\textit{Claim $1$. $i^{n-1}_{K_0,K_1}(\hat\gamma)=0$ and $i^{n-1}_{K_0,K_2}(\hat\gamma)=0$.}

Let $\lambda=i_{F,h(F\times\mathbb I)}^{n-1}(\gamma)$. Since $h|(F\times\mathbb I)$ is a homotopy between the identity on $F$ and the map $\varphi_2=h_1|F$, $\lambda=i_{h_1(F),h(F\times\mathbb I)}^{n-1}((\varphi_2)_*(\gamma))$. We consider the following commutative diagram

$$
\xymatrix{
&\gamma\in H_{n-1}(F;G)\ar[rr]^{i_{F,D}^{n-1}}\ar[d]_{(\varphi_2)_*}&& H_{n-1}(D;G)\ni\gamma_1\ar[d]_{(\varphi_1)_*}\\
&(\varphi_2)_*(\gamma)\in H_{n-1}(h_1(F);G)\ar[rr]^{i_{h_1(F),K_0}^{n-1}}\ar[d]_{i_{h_1(F),h(F\times\mathbb I)}^{n-1}}&& H_{n-1}(K_0;G)\ni\hat\gamma\ar[ddll]_{i_{K_0,K_1}^{n-1}}\\
&\lambda\in H_{n-1}(h(F\times\mathbb I);G)\ar[d]_{i_{h(F\times\mathbb I),K_1}^{n-1}}\\
&0\in H_{n-1}(K_1;G)\\
}
$$
Obviously, $i_{F,K_1}^{n-1}(\gamma)=i_{h(F\times\mathbb I),K_1}^{n-1}(\lambda)=i_{K_0,K_1}^{n-1}(\hat\gamma)$. On the other hand, $i_{F,K_1}^{n-1}(\gamma)=i_{K,K_1}^{n-1}(i_{F,K}^{n-1}(\gamma))=0$ because $i_{F,K}^{n-1}(\gamma)=0$. Hence, $i_{K_0,K_1}^{n-1}(\hat\gamma)=0$.

For the second part of the claim, observe that $i_{D,K}^{n-1}(\gamma_1)=i_{F,K}^{n-1}(\gamma)=0$.
Then, the equality $i^{n-1}_{K_0,K_2}(\hat\gamma)=0$ follows from next diagram
$$
\xymatrix{
&\gamma_1\in H_{n-1}(D;G)\ar[rr]^{i_{D,K}^{n-1}}\ar[d]_{(\varphi_1)_*}&& H_{n-1}(K;G)\ni 0\ar[d]_{(h_1)_*}\\
&\hat\gamma\in H_{n-1}(K_0;G)\ar[rr]^{i_{K_0,K_2}^{n-1}}&& H_{n-1}(K_2;G)\ni 0\\
}
$$

We are in a position now to complete the proof of Proposition 3.1. Let $P=K_1\cup K_2$. Since $h(K\times\mathbb I)\subset U$, $P\subset U$. Therefore, $P$  is contractible in $X$ (recall that $\overline U$ is contractible in $X$). Finally, by Claim 1 and the Phragmen-Brouwer theorem (see \cite{bb}),
%{ $$
%\begin{CD}
% H_{n}(P;G) @>{\partial}>>H_{n-1}(K_0;G)@>{\psi}>> H_{n-1}(K_1;G)\oplus H_{n-1}(K_2;G),
%\end{CD}
%$$}
%where $\psi=i^{n-1}_{K_0,K_1}\oplus i^{n-1}_{K_0,K_2}$, show that 
there exists a non-trivial $\alpha\in H_{n}(P;G)$. %with $\partial(\alpha)=(\hat\gamma)$.
\end{proof}

For simplicity, we say that a closed set $F\subset X$ is {\em strongly contractible in $X$} if $F$ is contractible in a closed set $A\subset X$ and
$A$ is contractible in $X$.
\begin{cor}
Let $X$ be a homogeneous compact metrizable $ANR$-space such that $n\in\mathcal{H}_{X,G}$ and $n+1\not\in\mathcal{H}_{X,G}$. Then for every closed set $F\subset X$ we have:
\begin{itemize}
\item[(1)] $H_{n}(F;G)=0$ provided $F$ is contractible in $X$;
\item[(2)] $F$ separates $X$ provided $H_{n-1}(F;G)\neq 0$ and $F$ is strongly contractible in $X$;
\item[(3)] If $K$ is a homological membrane for some non-trivial element of $H_{n-1}(F;G)$ and $K$ is contractible in $X$, then $(K\setminus F)\cap\overline{X\setminus K}=\varnothing$.
\end{itemize}
\end{cor}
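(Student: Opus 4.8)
The plan is to prove the three assertions in the order (1), (3), (2), since (3) will use (1) and (2) will use (3).

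For (1), I would first unwind the hypothesis: $n+1\notin\mathcal H_{X,G}$ says precisely that $i^n_{B,X}\colon H_n(B;G)\to H_n(X;G)$ is injective for every closed $B\subset X$. If $F$ is contractible in $X$, the inclusion $F\hookrightarrow X$ is homotopic to a constant map, so $i^n_{F,X}$ factors through the reduced homology of a point and is therefore the zero homomorphism. Injectivity together with $i^n_{F,X}=0$ forces $H_n(F;G)=0$.

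For (3), I would argue by contradiction using Proposition 2.1. Assuming $(K\setminus F)\cap\overline{X\setminus K}\neq\varnothing$, nonemptiness of this intersection yields $F\subsetneq K\subsetneq X$, so $F\subset K$ is a pair of proper compact subsets and, by hypothesis, $K$ is a membrane for a non-trivial element of $H_{n-1}(F;G)$ that is contractible in $X$. Proposition 2.1 then produces a proper compact $P\subset X$, contractible in $X$, with $H_n(P;G)\neq 0$. But $P$ is closed and contractible in $X$, so part (1) gives $H_n(P;G)=0$, a contradiction; hence the intersection is empty.

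For (2), let $\gamma\neq 0$ in $H_{n-1}(F;G)$ and let the closed set $A$ witness strong contractibility, so $F$ is contractible in $A$ and $A$ is contractible in $X$. Since $F$ is contractible in $A$ we have $i^{n-1}_{F,A}(\gamma)=0$, and applying the membrane existence property \cite[Property~5., p.103]{bb} \emph{inside the compactum $A$} yields a closed $K$ with $F\subset K\subset A$ that is an $(n-1)$-homology membrane for $\gamma$ spanned on $F$; note $K\neq F$ because $\gamma\neq 0$. As $K\subset A$ and $A$ is contractible in $X$, the set $K$ is contractible in $X$, so part (3) gives $(K\setminus F)\cap\overline{X\setminus K}=\varnothing$. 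Writing $X\setminus F=(K\setminus F)\cup(X\setminus K)$, the set $\overline{K\setminus F}\subset K$ misses $X\setminus K$, while $K\setminus F$ misses $\overline{X\setminus K}$ by part (3); thus the two pieces are mutually separated, and as both are nonempty they disconnect $X\setminus F$, i.e. $F$ separates $X$.

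The main obstacle will be the last step of (2): guaranteeing $X\setminus K\neq\varnothing$, that is, that the membrane $K$ is a \emph{proper} subset of $X$, for otherwise part (3) is vacuous and no separation follows. Since $K\subset A$, it suffices to know $A\neq X$, and this holds whenever $X$ is not contractible, because $A$ is contractible in $X$. The degenerate possibility $A=X$ forces $X$ to be contractible in itself, hence an $AR$, and must be treated separately; it is precisely here that constructing the membrane with $A$, rather than $X$, as ambient space is essential, since that is what confines $K$ to the proper set $A$.
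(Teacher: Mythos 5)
Your proposal is correct and follows essentially the same route as the paper: part (1) is the direct contrapositive of $n+1\notin\mathcal{H}_{X,G}$, and parts (2)--(3) combine Property 5 of Bing--Borsuk (applied inside the set $A$) with Proposition 2.1 exactly as the paper does, the only difference being that you derive (2) from (3) whereas the paper proves (2) and remarks that (3) falls out of the same argument. The ``degenerate possibility'' you flag at the end ($A=X$, so that the membrane $K$ need not be a proper subset of $X$) is passed over silently in the paper as well --- it asserts that $X\setminus K$ is non-empty without comment --- so this worry, while legitimate, does not mark a divergence from the published proof.
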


\begin{proof}
Since $F$ is contractible in $X$, every $\gamma\in H_{n}(F;G)$ is homologous to zero in $X$. So, the existence of a non-trivial element of
$H_{n}(F;G)$ would imply $n+1\in\mathcal{H}_{X,G}$, a contradiction.

To prove the second item, suppose $H_{n-1}(F;G)\neq 0$ and $F$ is strongly contractible in $X$. So, there exists a closed set $A\subset X$ such that $F$ is contractible in $A$ and $A$ is contractible in $X$. Then, by \cite[Property 5., p.103]{bb}, we can find a closed set $K\subset A$ containing $F$ which is a homological membrane for some non-trivial $\gamma\in H_{n-1}(F;G)$. Because $K$ (as a subset of $A$) is contractible in $X$, the assumption $(K\setminus F)\cap\overline{X\setminus K}\neq\varnothing$ would yield the existence of a proper closed set $P\subset X$ contractible in $X$ with $H_{n}(P;G)\neq 0$ (see Proposition 2.1). Consequently, there would be a non-trivial $\alpha\in H_{n}(P;G)$ homologous to zero in $X$. Hence, $n+1\in\mathcal{H}_{X,G}$, a contradiction. Therefore, $(K\setminus F)\cap\overline{X\setminus K}=\varnothing$. This means that
$X\setminus F=(K\setminus F)\cup (X\setminus K)$ with both $K\setminus F$ and $X\setminus K$ being  non-empty open disjoint subsets of $X$.

The above arguments provide also the proof of the third item.
\end{proof}

\textit{Proof of Theorem $1.1$.}
Suppose $X$ satisfies the hypotheses of Theorem 1.1. By \cite[Theorem 1.1]{vv1}, every $x\in X$ has a basis $\mathcal B_x=\{U_k\}_{k\geq 1}$ of open sets
satisfying the following conditions:
 $\bd U_k=\bd\,\overline  U_k$; the sets $U_k$, $\bd\,\overline  U_k$ and $X\setminus\overline U_k$ are connected; $H^{\dim X-1}(A;\mathbb Z)=0$ for all proper closed sets $A\subset\bd\,\overline  U_k$. We may also suppose that each $\overline U_{k+1}$ is contractible in $U_k$ and all $\overline U_k$ are strongly contractible in $X$. Let $G$ be an abelian group and $n\geq 2$ with $n+1\not\in\mathcal{H}_{X,G}$ and $n\in\mathcal{H}_{X,G}$.
So, there exist a closed set $B\subset X$ and a nontrivial element $\gamma\in H_{n-1}(B;G)$ with $i^{n-1}_{B,X}(\gamma)=0$. Then, by \cite[Property 5., p.103]{bb},
there is a closed set $K\subset X$ containing $B$ which is a homological membrane for $\gamma$.
 We fix a point $\widetilde{x}\in K\setminus B$
and its open in $K$ neighborhood $W$ with $\overline W\cap B=\varnothing$. According to \cite[Property 6., p.103]{bb}, $\overline W$ is an $(n-1)$-homological membrane for some non-trivial element of $H_{n-1}(\bd_K\overline W;G)$. We can choose $W$ so small that $\overline W$ is contractible in $X$. Then Corollary 2.2 yields $(\overline W\setminus \bd_K\overline W)\cap\overline{X\setminus\overline W}=\varnothing$. So, $\overline W\setminus \bd_K\overline W$ is open in $X$ and contains $\widetilde x$. Hence, there exists $k_0$ such that $U_k\subset \overline W\setminus \bd_K\overline W$ for all $U_k\in\mathcal B_{\widetilde x}$ with $k\geq k_0$.
Below we consider only the elements $U_k$ with $k\geq k_0$. Applying again \cite[Property 6., p.103]{bb}, we conclude that every $\overline U_k$ is a homological membrane for some non-trivial element of $H_{n-1}(\bd U_k;G)$.  By Corollary 2.2(1), $H_n(\overline U_k;G)=0$.
Suppose $\gamma\in H_{n-1}(\bd U_k;G)$ is non-trivial. Since $X\setminus\overline U_k$ is connected,  Corollary 2.2(2) implies that
$H_{n-1}(\overline U_k;G)=0$. Consequently, $\gamma$ is homologous to zero in
$\overline U_k$. So, by \cite[Property 5., p.103]{bb}, $\overline U_k$ contains a closed set $P$ such that $P$ is a homological membrane for $\gamma$. Then Corollary 2.2(3) implies
$(P\setminus \bd U_k)\cap\overline{X\setminus P}=\varnothing$. Hence, $X\setminus \bd U_k$ is the union of the disjoint open sets $P\setminus \bd U_k$
and $X\setminus P$. Because $U_k$ is connected and $U_k\cap P\neq\varnothing$, $U_k\subset P\setminus \bd U_k$. Therefore, $P=\overline U_k$.
This provides the proof of the first two conditions of Theorem 1.1.

To prove the last item of Theorem 1.1, assume that $H_{n-1}(F;G)\neq 0$ for some closed proper subset $F$ of $\bd U_{k+1}$, where $k\geq k_0$. Because $F$ (as a subset of $\overline U_{k+1}$) is strongly contractible in $X$, according to Corollary 2.2(2), $F$ separates $X$.
So, $X\setminus F$ is the union of two disjoint non-empty open in $X$ sets $V_1$ and $V_2$ with $\overline V_1\cap\overline V_2\subset F$. Let us show that $F$ separates $\overline U_{k}$. Indeed, otherwise
$\overline U_{k}\setminus F$ would be connected. Then $\overline U_{k}\setminus F$ should be contained in one of the sets $V_1, V_2$, say $V_1$. Since
$X\setminus\overline U_{k}$ is also connected and $V_2\neq\varnothing$, $X\setminus\overline U_{k}\subset V_2$. Hence,
$\overline{\overline U_{k}\setminus F}\cap\overline{X\setminus\overline U_{k}}\subset F$.
On the other hand, because $\overline U_{k}\setminus F$ is dense in $\overline U_{k}$ (recall that $F$ does not contain interior points),
$\overline{\overline U_{k}\setminus F}\cap\overline{X\setminus\overline U_{k}}=\bd U_{k}$. So, $F\supset bdU_{k}$, a contradiction. Therefore, $F$ separates $\overline U_{k}$.

The proof of Theorem 1.1(3) will be done if we show that $F$ can not separate $\overline U_k$. According to \cite[Theorem 1.1]{vv1},
$\overline{U}_k$ is an $(m-1)$-cohomology membrane spanned on $\bd U_k$ for some non-trivial $\alpha\in H^{m-1}(\bd U_k;\mathbb Z)$, where $m=\dim X$. This means that $\alpha$ (considered as a map from $\bd U_k$ to the Eilenberg-MacLane complex $K(\mathbb Z,m-1)$) is not extendable over $\overline{U}_k$ but it is extendable over any proper closed subset of $\overline{U}_k$. Hence, by \cite[Proposition 2.10]{vt}, the couple $(\overline{U}_k,\bd U_k)$ is a strong $K_\mathbb Z^{m}$-manifold (see \cite{vt} for the definition of a strong $K_\mathbb Z^m$-manifold). So, according to \cite[Theorem 3.3]{vt}, $H^{m-1}(F;\mathbb Z)\neq 0$ because $F$ separates $\overline U_k$ and $F\cap \bd U_k=\varnothing$. Finally, we obtained a contradiction because $H^{m-1}(A;\mathbb Z)=0$ for every proper closed subset $A$ of $\bd U_{k+1}$. Therefore, the all $U_k$, $k\geq k_0+1$, satisfy conditions $(1) - (3)$ from Theorem 1.1. $\Box$

\textit{Proof of Corollary $1.2$.} Suppose there exists a point $a\in (K\setminus B)\cap\overline{X\setminus K}$ and take a set $U\in\mathcal B_a$ satisfying
condition $(1)-(3)$ from Theorem 1.1 such that $U\cap B=\varnothing$. Then $F_U=\bd_K(U\cap K)$ is non-empty and
it follows from \cite[Property 6., p.103]{bb} that
  $\overline{U\cap K}$ is a homology membrane for some non-zero $\alpha\in H_{n-1}(F_U;G)$. Because $F_U\subset\bd U$, by Theorem 1.1(3), $F_U=\bd U$. So, $\overline U$ is a homological membrane for $\alpha$, see Theorem 1.1(1). This implies that $i^{n-1}_{\bd U,\overline{U\cap K}}(\alpha)\neq 0$ provided $\overline{U\cap K}$ is a proper subset of $\overline U$. Therefore,
$\overline{U\cap K}=\overline U$, which yields $U\subset K$. The last inclusion contradicts the fact that $a\in\overline{X\setminus K}$. Hence,
$(K\setminus B)\cap\overline{X\setminus K}=\varnothing$. $\Box$

\section{Cyclicity of homogeneous $ANR$'s}

Let $\mathcal H(n)$ be the class of all homogeneous metric $ANR$-compacta $X$ with $\dim X=n$.

\begin{thm}
The following conditions are equivalent:
\begin{itemize}
\item[(1)] If $n\geq 1$, then for every space $X\in\mathcal H(n)$ there exists a group $G$ with $H^n(X;G)\neq 0$;
\item[(2)] If $n\geq 1$ and $X\in\mathcal H(n)$, then for every closed set $F\subset X$ separating $X$ there exists a group $G$ with $H^{n-1}(F;G)\neq 0$;
\item[(3)] If $n\geq 1$ and $X\in\mathcal H(n)$, then for every $(n-1)$-dimensional closed set $F\subset X$ separating $X$ there exists a group $G$ with $H^{n-1}(F;G)\neq 0$.
\end{itemize}
\end{thm}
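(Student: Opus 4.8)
I would prove the three conditions equivalent by establishing the cycle $(1)\Rightarrow(2)\Rightarrow(3)\Rightarrow(1)$, of which $(2)\Rightarrow(3)$ is immediate because $(3)$ only asserts $(2)$ for the subclass of separators of dimension $n-1$. The common engine for the two substantive implications is the reduced \v{C}ech Mayer--Vietoris sequence attached to the closed decomposition $X=\overline{V_1}\cup\overline{V_2}$ produced by a separator $F$ (so $X\setminus F=V_1\sqcup V_2$ and $\overline{V_1}\cap\overline{V_2}=F_0\subseteq F$), combined with the fact that $\dim X=n<\infty$ forces $\dim_G X\le\dim X=n$ for every abelian group $G$ (this is the inequality $d_GX\le\dim X$ from \cite{a} in its cohomological form). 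The latter kills the top relative groups, so the sequences involved behave as they do for a closed manifold, and the whole difficulty concentrates in the connecting homomorphism $H^{n-1}(F_0;G)\to H^{n}(X;G)$.

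For $(1)\Rightarrow(2)$ I would argue as follows. Given a closed separator $F$, form $F_0=\overline{V_1}\cap\overline{V_2}$ and feed $X=\overline{V_1}\cup\overline{V_2}$ into Mayer--Vietoris. By $(1)$ choose $G$ with $H^{n}(X;G)\neq0$. The plan is to show that the restriction $H^{n}(X;G)\to H^{n}(\overline{V_1};G)\oplus H^{n}(\overline{V_2};G)$ vanishes, so that the connecting map $H^{n-1}(F_0;G)\to H^{n}(X;G)$ is surjective and hence $H^{n-1}(F_0;G)\neq0$. The vanishing of the top cohomology of the \emph{proper} closed sets $\overline{V_i}$ is where the local structure enters: covering $V_i$ by the basic neighbourhoods of Theorem 1.1 and invoking the membrane/bubble conclusions there (together with Corollary 2.2(1), which already gives $H_n(F;G)=0$ for sets contractible in $X$) should rule out an $n$-dimensional cohomology class carried by a proper closed subset. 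The final step is to promote $H^{n-1}(F_0;G)\neq0$ to $H^{n-1}(F;G')\neq0$ for the given, possibly larger, $F\supseteq F_0$; since $F_0$ is the essential (frontier) part of the separator, I would handle the passage through the exact sequence of the pair $(F,F_0)$.

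For $(3)\Rightarrow(1)$ I would pass to the contrapositive and assume $X$ is not cyclic, i.e. $H^{n}(X;G)=0$ for every $G$; the goal is to manufacture an $(n-1)$-dimensional closed separator that is acyclic in dimension $n-1$, contradicting $(3)$. Fix a point and the basis $\{U_k\}$ from \cite[Theorem 1.1]{vv1}, whose boundaries $\bd\,\overline U_k$ are $(n-1)$-dimensional separators of $X$ with $H^{n-1}(A;\mathbb Z)=0$ for every proper closed $A\subset\bd\,\overline U_k$. Reading the Mayer--Vietoris sequence for $X=\overline U_k\cup\overline{X\setminus U_k}$ with $H_n(X;G)=0$ and the membrane identity $H_{n-1}(\overline U_k;G)=0$ from Theorem 1.1, the bubble class on $\bd\,\overline U_k$ must inject into the far side $\overline{X\setminus U_k}$. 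I would then cut this far side down — using connectedness of $X\setminus\overline U_k$ and the bubble property to control where the class can live — so as to isolate a thin, $(n-1)$-dimensional separator across which the class dies in every coefficient group, i.e. an acyclic $(n-1)$-dimensional separator.

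The main obstacle, in both directions, is controlling the \emph{complementary} domain $\overline{X\setminus U_k}$ in Mayer--Vietoris: the membrane property of Theorem 1.1 only guarantees non-extendability of a class over the small inside $\overline U_k$, whereas cyclicity of $X$ is exactly the assertion that the class does not also become trivial on the far side. Converting "dies on the far side'' into an acyclic $(n-1)$-dimensional separator (for $(3)\Rightarrow(1)$), and proving the vanishing of $H^{n}$ on proper closed subsets (for $(1)\Rightarrow(2)$), are the two technical cores. A secondary but genuine subtlety is the coefficient group: condition $(3)$ supplies an a priori arbitrary $G$, while the membrane and bubble conclusions of Theorem 1.1 are anchored to groups with $n\in\mathcal H_{X,G}$ (guaranteed for $\mathbb S^1$ and $\mathbb Q_1$ since $\dim X=d_{\mathbb S^1}X=d_{\mathbb Q_1}X$). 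I expect to bridge this gap by exploiting the group-independent $\mathbb Z$-cohomology bubble property of $\bd\,\overline U_k$ furnished by \cite[Theorem 1.1]{vv1} and Pontryagin duality between $H_\ast(\,\cdot\,;\mathbb S^1)$ and $H^\ast(\,\cdot\,;\mathbb Z)$, so that the existential "for some $G$'' in all three conditions is witnessed by one of these distinguished groups.
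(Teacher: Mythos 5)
Your proposal has a genuine gap in the implication $(3)\Rightarrow(1)$, and it is precisely the step where the paper uses a trick you did not find. You pass to the contrapositive and try to manufacture, \emph{inside the given $X$}, an $(n-1)$-dimensional closed separator that is acyclic in dimension $n-1$ for every coefficient group. But there is no reason such a separator should exist, and your own candidate set points the wrong way: the boundaries $\bd\,\overline U_k$ furnished by \cite[Theorem 1.1]{vv1} are cohomology bubbles, i.e.\ $H^{n-1}(\bd\,\overline U_k;\mathbb Z)\neq 0$, so they are never acyclic; the plan to ``cut the far side down'' to isolate an acyclic thin separator is not an argument, and I see no way to complete it. The paper sidesteps this entirely: assuming $H^n(X;G)=0$ for all $G$, it forms $X\times\mathbb S^2\in\mathcal H(n+2)$, observes that $X\times\mathbb S^1$ is a closed separator of dimension $(n+2)-1=n+1$, so condition $(3)$ applied to the \emph{product} space forces $H^{n+1}(X\times\mathbb S^1;G')\neq 0$ for some $G'$, and then the K\"unneth formula (all relevant terms vanish since $\dim X=n$, $H^n(X;G')=0$, and $H^{1+i}(\mathbb S^1;G')=0$ for $i\geq 1$) gives $H^{n+1}(X\times\mathbb S^1;G')=0$, a contradiction. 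No acyclic separator of $X$ itself is ever produced.

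The implication $(1)\Rightarrow(2)$ is also not really proved in your sketch. The paper disposes of it by citing \cite[Corollary 1.2]{vv}; the substance of that result is exactly the vanishing $H^n(P;\mathbb Z)=0$ (more generally $H^n(P;G)=0$) for every \emph{proper} closed subset $P$ of a homogeneous $n$-dimensional $ANR$-compactum, which is what makes the restriction map in Mayer--Vietoris vanish and the connecting homomorphism $H^{n-1}(F;G)\to H^n(X;G)$ surjective. Your plan to derive this vanishing by ``covering $V_i$ by basic neighbourhoods and invoking the membrane/bubble conclusions'' is only a hope, not a proof --- and it is the main content of the cited paper. A smaller but real defect: your passage from $F_0=\overline{V_1}\cap\overline{V_2}$ back to the given separator $F\supseteq F_0$ via the exact sequence of the pair $(F,F_0)$ does not deliver $H^{n-1}(F;G)\neq 0$ from $H^{n-1}(F_0;G)\neq 0$ (the restriction $H^{n-1}(F;G)\to H^{n-1}(F_0;G)$ need not be surjective without further input); the standard and correct move, used in the paper's proof of Theorem 3.2, is to take the closed pieces to be $X_i=\overline{V_i}\cup F$ so that $X_1\cap X_2=F$ exactly, making the detour unnecessary. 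The final paragraph about Pontryagin duality and distinguished coefficient groups is not needed for any of the three implications.
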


\begin{proof}
$(1)\Rightarrow (2)$ Suppose $n\geq 1$ and $X\in\mathcal H(n)$. Then $H^n(X;G)\neq 0$ for some group $G$, and by \cite[Corollary 1.2]{vv}, $H^{n-1}(F;G)\neq 0$ for every non-empty closed set $F\subset X$ separating $X$.

$(2)\Rightarrow (3)$ This implication is trivial.

$(3)\Rightarrow (1)$ Suppose that condition $(3)$ holds, but there exists $n\geq 1$ and $X\in\mathcal H(n)$ such that $H^n(X;G)=0$ for all groups $G$.
Consider the two-dimensional sphere
$\mathbb S^2$ and a circle $\mathbb S^1$ separating $\mathbb S^2$. Then $X\times\mathbb S^2\in\mathcal H(n+2)$ and $X\times\mathbb S^1$ is a closed separator of $X\times\mathbb S^2$ of dimension $n+1$. So, there is a group $G'$ such that $H^{n+1}(X\times\mathbb S^1;G')\neq 0$. On the other hand, according to the K\"{u}nneth formula, we have the exact sequence
$$\sum_{i+j=n+1}H^i(X)\otimes H^j(\mathbb S^1)\to H^{n+1}(X\times\mathbb S^1)\to\sum_{i+j=n+2}H^i(X)*H^j(\mathbb S^1),$$
where the coefficient group $G'$ is suppressed. Because $\dim X=n$ and $\dim\mathbb S^1=1$, $H^{n+i}(X;G')=0$ and $H^{1+i}(\mathbb S^1;G')=0$ for all $i\geq 1$.
Moreover, $H^n(X;G')=0$. So, $$\sum_{i+j=n+1}H^i(X;G')\otimes H^j(\mathbb S^1;G')=\sum_{i+j=n+2}H^i(X;G')*H^j(\mathbb S^1;G')=0.$$
Hence, $H^{n+1}(X\times\mathbb S^1;G')=0$, a contradiction.
\end{proof}

A homological version of Theorem 3.1 also holds.
\begin{thm}
The following conditions are equivalent:
\begin{itemize}
\item[(1)] If $n\geq 1$, then for every space $X\in\mathcal H(n)$ there exists a group $G$ with $H_n(X;G)\neq 0$;
\item[(2)] If $n\geq 1$ and $X\in\mathcal H(n)$, then for every closed set $F\subset X$ separating $X$ there exists a group $G$ with $H_{n-1}(F;G)\neq 0$;
\item[(3)] If $n\geq 1$ and $X\in\mathcal H(n)$, then for every $(n-1)$-dimensional closed set $F\subset X$ separating $X$ there exists a group $G$ with $H_{n-1}(F;G)\neq 0$.
\end{itemize}
\end{thm}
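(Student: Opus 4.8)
The plan is to run the same cycle $(1)\Rightarrow(2)\Rightarrow(3)\Rightarrow(1)$ used for Theorem 3.1, replacing Čech cohomology by reduced Čech homology at every step; the formal skeleton is identical, and the only places where homology behaves differently from cohomology are the Künneth bookkeeping in $(3)\Rightarrow(1)$ and the separation input in $(1)\Rightarrow(2)$.

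For $(1)\Rightarrow(2)$ I would appeal to the homological counterpart of \cite[Corollary 1.2]{vv}: if $X\in\mathcal H(n)$ and $H_n(X;G)\neq 0$ for some group $G$, then $H_{n-1}(F;G)\neq 0$ for every non-empty closed set $F$ separating $X$. Granting this, fix $X\in\mathcal H(n)$; by (1) choose $G$ with $H_n(X;G)\neq 0$, and the separation statement gives $H_{n-1}(F;G)\neq 0$ for every closed separator $F$, which is exactly (2). The implication $(2)\Rightarrow(3)$ is trivial, since an $(n-1)$-dimensional closed separator is in particular a closed separator.

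For $(3)\Rightarrow(1)$ I would argue by contradiction, paralleling the Künneth computation of Theorem 3.1. Assume (3) holds but some $X\in\mathcal H(n)$ satisfies $H_n(X;G)=0$ for all groups $G$. Then $X\times\mathbb S^2\in\mathcal H(n+2)$, and since $\mathbb S^1$ separates $\mathbb S^2$, the set $X\times\mathbb S^1$ is a closed separator of $X\times\mathbb S^2$ with $\dim(X\times\mathbb S^1)=n+1=(n+2)-1$; hence (3) yields a group $G'$ with $H_{n+1}(X\times\mathbb S^1;G')\neq 0$. Now $X\times\mathbb S^1$ is an $ANR$-compactum, so its Čech homology is singular and the homological Künneth formula applies, giving the exact sequence
$$\sum_{i+j=n+1}H_i(X)\otimes H_j(\mathbb S^1)\to H_{n+1}(X\times\mathbb S^1)\to\sum_{i+j=n}H_i(X)*H_j(\mathbb S^1),$$
with $G'$ suppressed. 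In the tensor sum only the pairs $(i,j)\in\{(n+1,0),(n,1)\}$ can contribute, and $H_{n+1}(X;G')=0$ because $\dim X=n$ while $H_n(X;G')=0$ by assumption, so this sum vanishes. In the torsion sum each summand has the form $H_i(X)*H_j(\mathbb S^1)$ with $H_j(\mathbb S^1)$ free, so every torsion product is zero and this sum vanishes as well. Therefore $H_{n+1}(X\times\mathbb S^1;G')=0$, contradicting the choice of $G'$, and (1) follows. Note that, unlike the cohomological case where the torsion terms die by dimension, here they die because $\mathbb S^1$ has torsion-free integral homology.

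I expect the real obstacle to be the homological input to $(1)\Rightarrow(2)$, namely the analogue of \cite[Corollary 1.2]{vv}. Its cohomological form is comfortable precisely because Čech cohomology is exact and continuous on compact pairs, so a Mayer--Vietoris argument across the decomposition $X=(U\cup F)\cup(V\cup F)$, where $X\setminus F=U\sqcup V$ and $(U\cup F)\cap(V\cup F)=F$, transports the top class of $X$ to a class of $F$. Čech homology, however, is not exact for arbitrary coefficient groups, so this argument cannot be transcribed verbatim, and re-establishing the relevant exact sequences is the step that needs genuine care. The lever to pull is that $X$ is an $ANR$-compactum, whence its Čech homology coincides with singular homology and the exact sequence of the pair $(X,F)$ is available; one then has to verify that a nonzero class of $H_n(X;G)$ produces a nonzero class of $H_{n-1}(F;G)$, which is where the homogeneity and the $ANR$ structure must be used.
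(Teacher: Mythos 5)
The implication $(1)\Rightarrow(2)$ is where your proposal has a genuine gap: you invoke ``the homological counterpart of \cite[Corollary 1.2]{vv}'' and then, correctly sensing trouble, admit it still has to be established --- but the repair you sketch does not work. Your lever is that $X$ is a compact $ANR$, so its \v{C}ech homology is singular and the exact sequence of the pair $(X,F)$ is available. The problem is that the conclusion concerns $H_{n-1}(F;G)$ for an arbitrary closed separator $F$, and $F$ need not be an $ANR$; its \v{C}ech homology need not agree with its singular homology, so the singular sequence of $(X,F)$ says nothing about the group you need, and \v{C}ech homology with general coefficients has no exact Mayer--Vietoris sequence to substitute. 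The paper's actual mechanism is different: it passes to exact (Steenrod) homology $\widehat{H}_*$ and uses the universal-coefficient sequence $\mathrm{Ext}(H^{k+1}(\cdot;\mathbb Z),G)\to\widehat{H}_k(\cdot;G)\to\mathrm{Hom}(H^k(\cdot;\mathbb Z),G)\to 0$ of \cite{sk}, together with the natural surjection $\widehat{H}_k\to H_k$. This converts $H_n(X;G)\neq 0$ into $H^n(X;\mathbb Z)\neq 0$ plus a nontrivial homomorphism $\varphi\colon H^n(X;\mathbb Z)\to G$; the Mayer--Vietoris sequence in integral \v{C}ech cohomology (which is exact) and the fact that $H^n(P;\mathbb Z)=0$ for every proper closed $P\subset X$ show that $\partial\colon H^{n-1}(F;\mathbb Z)\to H^n(X;\mathbb Z)$ is onto, so $\mathrm{Hom}(H^{n-1}(F;\mathbb Z),G)\neq 0$, hence $\widehat{H}_{n-1}(F;G)\neq 0$; finally $\widehat{H}_{n-1}(F;G)\cong H_{n-1}(F;G)$ because $H^n(F;\mathbb Z)=0$. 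That last isomorphism is precisely how one gets back to \v{C}ech homology of the non-$ANR$ set $F$, and nothing in your sketch replaces it.

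Your $(3)\Rightarrow(1)$ is salvageable but needs care with which K\"unneth formula you mean. If the torsion terms are $\mathrm{Tor}(H_i(X;G'),H_j(\mathbb S^1;G'))$ --- the reading the paper itself uses in Proposition 3.5 --- then $H_1(\mathbb S^1;G')\cong G'$ is not free for general $G'$ and the term $\mathrm{Tor}(H_{n-1}(X;G'),G')$ need not vanish; that is exactly why Proposition 3.5 is restricted to fields and torsion-free groups. Your parenthetical ``$\mathbb S^1$ has torsion-free integral homology'' points at the correct version, namely $H_{n+1}(X\times\mathbb S^1;G')\cong H_{n+1}(X;G')\oplus H_n(X;G')$ obtained by keeping the $\mathbb S^1$ factor with $\mathbb Z$ coefficients, and with that reading your computation goes through for singular (hence \v{C}ech, since $X\times\mathbb S^1$ is a compact $ANR$) homology. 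The paper sidesteps the issue entirely by converting $H_{n+1}(X\times\mathbb S^1;G')\neq 0$ into $H^{n+1}(X\times\mathbb S^1;\mathbb Z)\neq 0$ via the same Steenrod/universal-coefficient device and then contradicting the integral cohomological K\"unneth computation of Theorem 3.1. Either route is fine once stated precisely, but the first implication cannot be left as an appeal to an unproved homological analogue.
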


\begin{proof}
Everywhere below, $\widehat{H}_*$ denotes the exact homology (see \cite{ma}, \cite{sk}), which for locally compact metric spaces is equivalent to Steenrod's homology \cite{st}. For every compact metric space $X$ and every $k$ there exists a surjective homomorphism
$T^k_{X}:\widehat{H}_{k}(X;G)\to H_{k}(X;G)$.
According to \cite[Theorem 4 and p.125]{sk}, $T^k_{X}$ is an isomorphism in each of the following cases: $G$ is a vector space over a field; both $\widehat{H}_{k}(X;G)$ and $G$ are countable modules; $\dim X=k$; $H^{k+1}(X;\mathbb Z)$ is finitely generated.

$(1)\Rightarrow (2)$ Suppose $n\geq 1$ and $X\in\mathcal H(n)$. Then $H_n(X;G)\neq 0$ for some group $G$.
 By \cite[Theorem 3]{sk}, we have the exact sequence
$$\mathrm{Ext}(H^{n+1}(X;\mathbb Z),G)\to\widehat{H}_{n}(X;G)\to\mathrm{Hom}(H^{n}(X;\mathbb Z),G)\to 0.\leqno{(*)}$$
Since $\dim X=n$, $H^{n+1}(X;\mathbb Z)=0$. Moreover $\widehat{H}_{n}(X;G)$ is non-trivial because so is $H_{n}(X;G)$ and $T^n_X$ is a surjective homomorphism. Hence, $H^{n}(X;\mathbb Z)\neq 0$ and there exists a non-trivial homomorphism $\varphi\colon H^{n}(X;\mathbb Z)\to G$. Now, let $F\subset X$ be a closed separator of $X$ and $X\setminus F=X_1\cup X_2$, where $X_1,X_2\subset X$ are closed proper subsets with $X_1\cap X_2=F$. Since $H^n(P;\mathbb Z)=0$ for every closed proper subset $P\subset X$ (see \cite{vv}), $H^n(F;\mathbb Z)=H^n(X_1;\mathbb Z)=H^n(X_2;\mathbb Z)=0$. Then it follows from the Mayer-Vietoris sequence
$$
\xymatrix{
H^{n-1}(F;\mathbb Z)\ar[rr]^{\partial} && H^{n}(X;\mathbb Z)\ar[rr]^{\psi}&&H^{n}(X_1;\mathbb Z)\oplus H^{n}(X_1;\mathbb Z)\\
}
$$
that $H^{n-1}(F;\mathbb Z)\neq 0$ and $\partial$ is a surjective homomorphism. Consequently, $\varphi\circ\partial:H^{n-1}(F;\mathbb Z)\to G$ is also a non-trivial surjective homomorphism. Hence, $\mathrm{Hom}(H^{n-1}(F;\mathbb Z),G)\neq 0$ and the exact sequence
$$0\to\mathrm{Ext}(H^{n}(F;\mathbb Z),G)\to\widehat{H}_{n-1}(F;G)\to\mathrm{Hom}(H^{n-1}(F;\mathbb Z),G)\to 0$$ yields $\widehat{H}_{n-1}(F;G)\neq 0$.
Finally, since $H^n(F;\mathbb Z)=0$, $\widehat{H}_{n-1}(F;G)$ is isomorphic to $H_{n-1}(F;G)$.

$(2)\Rightarrow (3)$ This implication is obvious.

$(3)\Rightarrow (1)$ As in the proof of Theorem 3.1, $(3)\Rightarrow (1)$, suppose there exists $n\geq 1$ and $X\in\mathcal H(n)$ such that $H_n(X;G)=0$ for all groups $G$. Since $\widehat{H}_n(X;G)$ is isomorphic to $H_n(X;G)$ and $H^{n+1}(X;\mathbb Z)=0$
(recall that $\dim X=n$), it follows from the exact sequence $(*)$ that
$\mathrm{Hom}(H^{n}(X;\mathbb Z),G)=0$ for all groups $G$. This implies $H^{n}(X;\mathbb Z)=0$. As above, the product
$X\times\mathbb S^1$ is a closed separator of $X\times\mathbb S^2$, and according to our assumption, $H_{n+1}(X\times\mathbb S^1;G')\neq 0$ for some group $G'$. Because
$\dim X\times\mathbb S^1=n+1$, $H_{n+1}(X\times\mathbb S^1;G')\cong\widehat{H}_{n+1}(X\times\mathbb S^1;G')$ and $H^{n+2}(\dim X\times\mathbb S^1,\mathbb Z)=0$. Therefore, the exact sequence
$$\mathrm{Ext}(H^{n+2}(X\times\mathbb S^1),G')\to\widehat{H}_{n+1}(X\times\mathbb S^1;G')\to\mathrm{Hom}(H^{n+1}(X\times\mathbb S^1),G'),$$ where the coefficient group $\mathbb Z$ in $H^{n+2}(X\times\mathbb S^1)$ and $H^{n+1}(X\times\mathbb S^1)$ are suppressed, yields
that $H^{n+1}(X\times\mathbb S^1;\mathbb Z)\neq 0$. On the other hand,
the K\"{u}nneth formula from the proof of Theorem 3.1 (with $\mathbb Z$ being the coefficient group in all cohomology groups) implies $H^{n+1}(X\times\mathbb S^1;\mathbb Z)=0$, a contradiction.
\end{proof}

\begin{cor}
Suppose for all $n\geq 1$ and all $X\in\mathcal H(n)$ the following holds: For every closed separator $F$ of $X$ with $\dim F=n-1$ there exists a group $G$ such that either $H^{n-1}(F;G)\neq 0$ or $H_{n-1}(F;G)\neq 0$. Then there is no homogeneous metric $AR$-compactum $Y$ with $\dim Y<\infty$.
\end{cor}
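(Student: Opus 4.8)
The plan is to argue by contradiction, reusing the product construction $X\times\mathbb S^2$ that drives the implications $(3)\Rightarrow(1)$ in both Theorem 3.1 and Theorem 3.2. Suppose, contrary to the assertion, that there is a finite-dimensional homogeneous metric $AR$-compactum $Y$ that is not a single point, and put $n=\dim Y$, so that $n\geq 1$. Since $Y$ is an $AR$, it is contractible, and hence its reduced \v{C}ech homology and cohomology groups vanish in every positive dimension: $H_k(Y;G)=0$ and $H^k(Y;G)=0$ for all $k\geq 1$ and every abelian group $G$. In particular $H^n(Y;G)=0$ for every $G$, and $H^n(Y;\mathbb Z)=H^{n+1}(Y;\mathbb Z)=0$. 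These vanishings will play exactly the role that the assumption $H^n(X;G)=0$ (for all $G$) plays in the proofs of Theorems 3.1 and 3.2; the difference here is that contractibility supplies them for free.

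First I would form $X=Y\times\mathbb S^2$. As in the proofs of Theorems 3.1 and 3.2, $X$ is a homogeneous metric $ANR$-compactum with $\dim X=n+2$, so $X\in\mathcal H(n+2)$, and $F=Y\times\mathbb S^1$ is a closed separator of $X$ with $\dim F=n+1=(n+2)-1$, inherited from the fact that $\mathbb S^1$ separates $\mathbb S^2$. Applying the hypothesis of the corollary to the pair $(X,F)$ produces an abelian group $G'$ for which either $H^{n+1}(F;G')\neq 0$ or $H_{n+1}(F;G')\neq 0$. The subtlety to keep in mind is that the hypothesis delivers only this disjunction for the single separator $F$, rather than guaranteeing that condition $(3)$ of Theorem 3.1 or that of Theorem 3.2 holds uniformly; consequently one cannot simply invoke those equivalences, and both alternatives must be closed off directly.

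In the cohomological alternative I would run the computation from the proof of Theorem 3.1. The K\"{u}nneth exact sequence for $F=Y\times\mathbb S^1$ with coefficients in $G'$ has outer terms $\sum_{i+j=n+1}H^i(Y;G')\otimes H^j(\mathbb S^1;G')$ and $\sum_{i+j=n+2}H^i(Y;G')*H^j(\mathbb S^1;G')$; since $H^j(\mathbb S^1;G')=0$ for $j\neq 1$ and $H^n(Y;G')=H^{n+1}(Y;G')=0$, both outer terms vanish, forcing $H^{n+1}(F;G')=0$ and contradicting the choice of $G'$.

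The homological alternative is the more delicate step, and I expect it to be the main obstacle; here I would follow the Steenrod-homology bookkeeping from the proof of Theorem 3.2. Because $\dim F=n+1$, one has $H_{n+1}(F;G')\cong\widehat H_{n+1}(F;G')$ and $H^{n+2}(F;\mathbb Z)=0$, so in the universal-coefficient sequence $\mathrm{Ext}(H^{n+2}(F;\mathbb Z),G')\to\widehat H_{n+1}(F;G')\to\mathrm{Hom}(H^{n+1}(F;\mathbb Z),G')$ the left-hand term vanishes while the middle term is nonzero, whence $\mathrm{Hom}(H^{n+1}(F;\mathbb Z),G')\neq 0$ and therefore $H^{n+1}(F;\mathbb Z)\neq 0$. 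On the other hand, the K\"{u}nneth sequence of the previous paragraph taken with $\mathbb Z$-coefficients, together with $H^n(Y;\mathbb Z)=H^{n+1}(Y;\mathbb Z)=0$, gives $H^{n+1}(F;\mathbb Z)=0$, a contradiction. Since both alternatives are impossible, no such $Y$ can exist, which is the desired conclusion. The only points requiring care beyond this are the standard facts that $Y\times\mathbb S^2$ is genuinely homogeneous, compact, $ANR$, and of dimension exactly $n+2$, and that the Steenrod comparison isomorphisms used above are licensed by the dimension and finiteness hypotheses recorded at the start of the proof of Theorem 3.2.
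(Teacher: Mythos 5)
Your proof is correct and follows the route the paper intends: the paper leaves Corollary 3.3 without an explicit argument, presenting it as a consequence of Theorems 3.1 and 3.2, and the intended derivation is exactly your computation with the separator $Y\times\mathbb S^1$ of $Y\times\mathbb S^2$, using contractibility of the $AR$ $Y$ to supply the vanishing of $H^n(Y;G)$, $H_n(Y;G)$ and $H^n(Y;\mathbb Z)$ that the proofs of $(3)\Rightarrow(1)$ in those theorems otherwise have to assume or derive. You also rightly note that the disjunctive hypothesis prevents quoting either theorem as a black box, so rerunning both K\"unneth/universal-coefficient computations on the single separator $Y\times\mathbb S^1$ is the correct (and necessary) way to close both alternatives.
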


If $\mathcal H(G,n)$ denotes the class of all homogeneous metric $ANR$-compacta $X$ with $\dim_GX=n$, the arguments from Theorem 3.1  provide the following result:

\begin{pro}
The following conditions are equivalent:
\begin{itemize}
\item[(1)] $H^n(X;G)\neq 0$ for all $X\in\mathcal H(G,n)$ and all $n\geq 1$;
\item[(2)] If $X\in\mathcal H(G,n)$ and $n\geq 1$, then $H^{n-1}(F;G)\neq 0$ for every closed set $F\subset X$ separating $X$;
\item[(3)] If $X\in\mathcal H(G,n)$ and $n\geq 1$, $H^{n-1}(F;G)\neq 0$ for every closed set $F\subset X$ separating $X$ with $\dim_GF=n-1$.
\end{itemize}
\end{pro}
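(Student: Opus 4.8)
The plan is to run the proof of Theorem 3.1 essentially verbatim, replacing the covering dimension $\dim$ by the cohomological dimension $\dim_G$ and, crucially, keeping the single coefficient group $G$ fixed throughout all three conditions instead of letting it vary. For $(1)\Rightarrow(2)$ I would argue exactly as in Theorem 3.1: if $X\in\mathcal H(G,n)$ then $n=\dim_G X$, condition $(1)$ gives $H^n(X;G)\neq 0$, and \cite[Corollary 1.2]{vv}, applied with $n=\dim_G X$, yields $H^{n-1}(F;G)\neq 0$ for every closed separator $F$ of $X$. The implication $(2)\Rightarrow(3)$ is a trivial restriction to those separators $F$ with $\dim_G F=n-1$.

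The substantive implication is $(3)\Rightarrow(1)$, and here I would imitate the product construction of Theorem 3.1. Assuming $(3)$ holds but some $X\in\mathcal H(G,n)$ has $H^n(X;G)=0$, I would pass to $X\times\mathbb S^2$ and its closed separator $X\times\mathbb S^1$ (since $\mathbb S^1$ separates $\mathbb S^2$). Then I would apply $(3)$ with the \emph{same} group $G$ to obtain $H^{n+1}(X\times\mathbb S^1;G)\neq 0$, and contradict this through the K\"unneth formula: because $\dim_G X=n$ forces $H^{m}(X;G)=0$ for $m\geq n+1$ while the standing assumption forces $H^n(X;G)=0$, every tensor summand $H^i(X;G)\otimes H^j(\mathbb S^1;G)$ with $i+j=n+1$ and every torsion summand $H^i(X;G)*H^j(\mathbb S^1;G)$ with $i+j=n+2$ vanishes (the only contributing pairs are $(n+1,0),(n,1)$ and $(n+2,0),(n+1,1)$), whence $H^{n+1}(X\times\mathbb S^1;G)=0$. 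The advantage over Theorem 3.1 is that condition $(3)$ now delivers nonvanishing for the very group $G$ we are controlling, so we never need $H^n(X;\cdot)$ to vanish for \emph{all} groups — vanishing for the one fixed $G$ suffices.

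The main obstacle, and the only place where the fixed-$G$ version genuinely departs from Theorem 3.1, is the dimension bookkeeping: I must verify $X\times\mathbb S^2\in\mathcal H(G,n+2)$ and $\dim_G(X\times\mathbb S^1)=n+1$, that is, the additivity $\dim_G(X\times\mathbb S^k)=\dim_G X+k$. For covering dimension this was automatic (a product with a manifold), but cohomological dimension is not additive under products for an arbitrary coefficient group. Here, however, $\mathbb S^k$ has free, finitely generated integral cohomology concentrated in degrees $0$ and $k$, so the K\"unneth formula for the pair $(X\times\mathbb S^k,A\times\mathbb S^k)$ is exact and splits as $H^{*}(X,A;G)\otimes H^{*}(\mathbb S^k;\mathbb Z)$ with no correction terms. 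Choosing a closed $A\subset X$ realizing $\dim_G X=n$ gives the lower bound $\dim_G(X\times\mathbb S^k)\geq n+k$, and the same exactness together with $H^m(X,A;G)=0$ for $m>n$ gives the matching upper bound. This is precisely the cohomological analogue of the statement that spheres are dimensionally full-valued, and it is what makes the single group $G$ propagate correctly from $X$ to the two products.
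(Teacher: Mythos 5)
Your proposal is correct and follows essentially the same route as the paper, which offers no separate argument for this proposition beyond the remark that ``the arguments from Theorem 3.1 provide the following result.'' Your explicit verification that $\dim_G(X\times\mathbb S^k)=\dim_G X+k$ (dimensional full-valuedness of spheres for cohomological dimension) supplies precisely the bookkeeping detail the paper leaves implicit, and the rest of your argument matches the proof of Theorem 3.1 step for step.
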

The corresponding homological analogue of Proposition 3.4 also holds for some groups $G$.
\begin{pro}
The following conditions are equivalent, where $G$ is either a field or a torsion free group:
\begin{itemize}
\item[(1)] $H_n(X;G)\neq 0$ for all $X\in\mathcal H(n)$ and all $n\geq 1$;
\item[(2)] If $X\in\mathcal H(n)$, $n\geq 1$, and $F\subset X$ is a closed set separating $X$, then $H_{n-1}(F;G)\neq 0$;
\item[(3)] If $X\in\mathcal H(n)$, $n\geq 1$, and $F\subset X$ is a closed set separating $X$ with $\dim F=n-1$, then $H_{n-1}(F;G)\neq 0$.
\end{itemize}
\end{pro}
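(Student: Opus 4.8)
The plan is to run the proof of Theorem 3.2 with the single fixed group $G$ in place of the quantifier ``there exists a group $G$'', checking that at each step the comparison homomorphism $T^k$ may be taken to be an isomorphism and that the one place where varying groups were previously needed can be replaced by a Künneth computation valid for the coefficient groups allowed here. Throughout I keep the exact homology $\widehat H_*$, the natural surjection $T^k_X\colon\widehat H_k(X;G)\to H_k(X;G)$, and the universal-coefficient sequence $(*)$ from the proof of Theorem 3.2. The hypothesis on $G$ enters only through the homological behaviour of $T^k$ and of the Künneth formula: if $G$ is a field then $T^k$ is always an isomorphism and all groups in sight are $G$-vector spaces, while if $G$ is torsion free then $G$ is flat, so $\mathrm{Tor}(-,G)=0$ and the correction terms in the Künneth and universal-coefficient formulas disappear.

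For $(1)\Rightarrow(2)$ I would repeat the argument of Theorem 3.2 verbatim. Fix $X\in\mathcal H(n)$ and a closed separator $F$, with $X\setminus F=X_1\cup X_2$, $X_1\cap X_2=F$, and $X_1,X_2$ proper closed. By $(1)$ we have $H_n(X;G)\neq 0$, hence $\widehat H_n(X;G)\neq 0$ because $T^n_X$ is onto; since $\dim X=n$ we have $H^{n+1}(X;\mathbb Z)=0$, so $(*)$ gives $\mathrm{Hom}(H^n(X;\mathbb Z),G)\neq 0$ and in particular a non-trivial $\varphi\colon H^n(X;\mathbb Z)\to G$. Using $H^n(P;\mathbb Z)=0$ for every proper closed $P\subset X$ (\cite{vv}) together with the Mayer--Vietoris sequence, the connecting map $\partial\colon H^{n-1}(F;\mathbb Z)\to H^n(X;\mathbb Z)$ is surjective, so $\varphi\circ\partial\neq 0$ and $\mathrm{Hom}(H^{n-1}(F;\mathbb Z),G)\neq 0$. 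Then $(*)$ for $F$ yields $\widehat H_{n-1}(F;G)\neq 0$, and since $H^n(F;\mathbb Z)=0$ is finitely generated, $T^{n-1}_F$ is an isomorphism, whence $H_{n-1}(F;G)\neq 0$, which is $(2)$.

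The implication $(2)\Rightarrow(3)$ is trivial. For $(3)\Rightarrow(1)$ I argue by contradiction: suppose $H_n(X;G)=0$ for some $X\in\mathcal H(n)$. As in Theorem 3.2 pass to $X\times\mathbb S^2$, which lies in $\mathcal H(n+2)$, and to its closed separator $X\times\mathbb S^1$ of dimension $n+1$; condition $(3)$ then gives $H_{n+1}(X\times\mathbb S^1;G)\neq 0$. On the other hand, since $H_*(\mathbb S^1;\mathbb Z)$ is free, the Künneth formula (available because $G$ is a field or torsion free, so that the $\mathrm{Tor}$ terms vanish) gives $H_{n+1}(X\times\mathbb S^1;G)\cong H_{n+1}(X;G)\oplus H_n(X;G)$. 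Here $H_{n+1}(X;G)=0$ because $\dim X=n$, and $H_n(X;G)=0$ by assumption, so $H_{n+1}(X\times\mathbb S^1;G)=0$, a contradiction. Equivalently one may stay over $\mathbb Z$: the sequence $(*)$ turns $H_{n+1}(X\times\mathbb S^1;G)\neq 0$ into $\mathrm{Hom}(H^{n+1}(X\times\mathbb S^1;\mathbb Z),G)\neq 0$, the integral Künneth formula identifies $H^{n+1}(X\times\mathbb S^1;\mathbb Z)\cong H^n(X;\mathbb Z)$, while $H_n(X;G)=0$ forces $\mathrm{Hom}(H^n(X;\mathbb Z),G)=0$, again a contradiction.

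The step I expect to be the main obstacle is justifying the Künneth computation inside this homology theory: Čech homology is not exact, so one must work with $\widehat H_*$ and make sure that $T^{n+1}_{X\times\mathbb S^1}$ and $T^n_X$ are genuine isomorphisms and that the product formula holds with no surviving derived terms. This is precisely where the hypothesis ``$G$ a field or torsion free'' is indispensable, and it is also where one must invoke that $X\times\mathbb S^2$ is again a homogeneous metric $ANR$-compactum with $\dim(X\times\mathbb S^2)=n+2$ and that $X\times\mathbb S^1$ is a genuine separator of it; the remaining input, $H^n(P;\mathbb Z)=0$ for proper closed $P\subset X$, is quoted from \cite{vv} exactly as in Theorem 3.2.
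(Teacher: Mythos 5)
Your argument is correct and follows the paper's proof in all essentials: $(1)\Rightarrow(2)$ is the universal-coefficient/Mayer--Vietoris argument from Theorem~3.2 run with the fixed group $G$, and $(3)\Rightarrow(1)$ uses the separator $X\times\mathbb S^1$ of $X\times\mathbb S^2$ together with the K\"unneth formula, with the hypothesis that $G$ is a field or torsion free serving exactly to kill the torsion-product term $H_{n-1}(X;G)*H_1(\mathbb S^1;G)$. The only point you flag as a potential obstacle --- justifying K\"unneth in this homology theory --- is handled in the paper more simply than you anticipate: since $X$ and $X\times\mathbb S^1$ are compact $ANR$'s, their \v{C}ech homology groups coincide with singular homology, so the ordinary singular K\"unneth sequence applies directly.
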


\begin{proof}
All implications except $(3)\Rightarrow (1)$ follow from the proof of Theorem 3.2. To prove $(3)\Rightarrow (1)$, we suppose there exists a space $X\in\mathcal H(n)$ with $H_n(X;G)=0$. Considering the $(n+1)$-dimensional separator $X\times\mathbb S^1$ of $X\times\mathbb S^2$, we conclude that $H_{n+1}(X\times\mathbb S^1;G)\neq 0$.
Because $X$ and $X\times\mathbb S^1$ are $ANR$'s, their \v{C}ech homology groups are isomorphic to the singular homology groups. Thus, we can apply the
K\"{u}nneth formula
$$\sum_{i+j=n+1}H_i(X)\otimes H_j(\mathbb S^1)\to H_{n+1}(X\times\mathbb S^1)\to\sum_{i+j=n}H_i(X)*H_j(\mathbb S^1),$$
where $G$ is the coefficient group. Since $H_n(X;G)=H_{n+1}(X;G)=0$ and $H_j(\mathbb S^1;G)=0$ for all $j>1$, $\sum_{i+j=n+1}H_i(X;G)\otimes H_j(\mathbb S^1;G)=0$.
If $G$ is a torsion free group, then the group $\sum_{i+j=n}H_i(X;G)*H_j(\mathbb S^1;G)$ is also trivial because $H_1(\mathbb S^1;G)=G$ yields
$H_{n-1}(X;G)*H_1(\mathbb S^1;G)=0$. Therefore, $H_{n+1}(X\times\mathbb S^1;G)=0$, a contradiction.

When $G$ is a field, the group $H_{n+1}(X\times\mathbb S^1;G)$ is isomorphic to the trivial group $\sum_{i+j=n+1}H_i(X;G)\otimes H_j(\mathbb S^1;G)$.
So, we have again a contradiction.
\end{proof}

%Let $X$ be a homogeneous metric $ANR$-compactum of dimension $n$. Then $d_ZX=n$

%\textbf{Acknowledgments.} The author would like to express his gratitude to A. Dranishnikov, A. Karassev and G. Skordev for their helpful comments.

\end{document}